\newtheorem{Thm}{Theorem} %[section]
\newtheorem{Lem}[Thm]{Lemma}
\newtheorem{Prop}[Thm]{Proposition}
\numberwithin{equation}{section}
\renewcommand{\phi}{\varphi}
\newcommand{\C}{\mathrm{C}}
\newcommand{\N}{\mathrm{N}}
\newcommand{\Z}{\mathrm{Z}}
\newcommand{\pcore}{\mathrm{O}}
\newcommand{\E}{\mathrm{E}}
\newcommand{\F}{\mathrm{F}}
\newcommand{\QQ}{\mathbb{Q}}
\newcommand{\NN}{\mathbb{N}}
\newcommand{\Aut}{\operatorname{Aut}}
\newcommand{\Out}{\operatorname{Out}}
\newcommand{\SL}{\operatorname{SL}}
\newcommand{\PGL}{\operatorname{PGL}}
\newcommand{\PSU}{\operatorname{PSU}}
\newcommand{\PSL}{\operatorname{PSL}}
\newcommand{\Irr}{\mathrm{Irr}}
\newcommand{\IBr}{\mathrm{IBr}}
\newcommand{\Ker}{\operatorname{Ker}}
\title{Groups of $p$-central type}
\author{Benjamin Sambale\footnote{Institut für Algebra, Zahlentheorie und Diskrete Mathematik, Leibniz Universität Hannover, Welfengarten 1, 30167 Hannover, Germany,
\href{mailto:sambale@math.uni-hannover.de}{sambale@math.uni-hannover.de}}}
\date{\today}
\begin{document}
\frenchspacing
\maketitle
\renewcommand{\sectionautorefname}{Section}

\begin{abstract}\noindent
A finite group $G$ with center $Z$ is of central type if there exists a fully ramified character $\lambda\in\Irr(Z)$, i.\,e. the induced character $\lambda^G$ is a multiple of an irreducible character. Howlett--Isaacs have shown that $G$ is solvable in this situation. A corresponding theorem for $p$-Brauer characters was proved by Navarro--Späth--Tiep under the assumption that $p\ne 5$. We show that there are no exceptions for $p=5$, i.\,e. every group of $p$-central type is solvable. 
Gagola proved that every solvable group can be embedded in $G/Z$ for some group $G$ of central type. We generalize this to groups of $p$-central type. As an application we construct some interesting non-nilpotent blocks with a unique Brauer character. This is related to a question by Kessar and Linckelmann.
\end{abstract}

\textbf{Keywords:} groups of central type, fully ramified characters, Howlett--Isaacs theorem\\
\textbf{AMS classification:} 20C15, 20C20

\section{Introduction}

For an irreducible character $\chi$ of a finite group $G$ it is easy to show that $\chi(1)^2\le|G:Z|$ where $Z:=\Z(G)$ denotes the center of $G$ (see \cite[Corollary~2.30]{Isaacs}). If equality holds for some $\chi\in\Irr(G)$, we say that $G$ has \emph{central type}. It was conjectured by Iwahori--Matsumoto and eventually proved by Howlett--Isaacs~\cite{HowlettIsaacs}, using the classification of finite simple groups (CFSG), that all groups of central type are solvable (see also \cite[Chapter~8]{Navarro2}). Apart from this there are no restrictions on the structure of groups of central type. In fact, Gagola~\cite[Theorem~1.2]{GagolaFully} has shown that every solvable group can be embedded into $G/Z$ for some group $G$ of central type. 

It is well-known that $G$ is of central type if there exists some \emph{fully ramified} character $\lambda\in\Irr(Z)$, i.\,e. $\lambda^G=e\chi$ for some integer $e$ and some $\chi\in\Irr(G)$ (in this case $\chi(1)^2=e^2=|G:Z|$; see \cite[Lemma~8.2]{Navarro2}). This can be carried over to Brauer characters over an algebraically closed field of characteristic $p>0$. We call $\lambda\in\IBr(Z)$ \emph{fully ramified} if $\lambda^G=e\phi$ for some integer $e$ and $\phi\in\IBr(G)$. In this situation, Navarro--Späth--Tiep~\cite[Theorem~A]{NavarroFully} proved (again relying on the CFSG) that $G$ is solvable unless $p=5$. Using the structure of a minimal counterexample for $p=5$ (as described in \cite{NavarroFully}), we are able to eliminate this exceptional case. 

\begin{Thm}\label{mainNST}
Let $G$ be a finite group, $Z\unlhd G$ and $\lambda\in\IBr(Z)$ be $G$-invariant. Suppose that $\lambda^G=e\phi$ for some integer $e$ and $\phi\in\IBr(G)$. Then $G/Z$ is solvable.
\end{Thm}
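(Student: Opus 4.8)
The plan is to reduce the statement to the case already settled by Navarro--Späth--Tiep, namely that the only possible obstruction occurs for $p=5$, and then to eliminate that exceptional prime by a careful analysis of a minimal counterexample. First I would pass to a minimal counterexample $G$, so that $G/Z$ is non-solvable of smallest possible order, and exploit minimality to force strong structural constraints: any proper normal subgroup or quotient through which $\lambda$ (or its restriction) still ramifies fully must itself satisfy the conclusion by induction, so the non-solvable behaviour must be concentrated in a single chief factor. The standard machinery for fully ramified Brauer characters (Clifford theory over $Z$, the projective representation attached to $\lambda$, and the associated block-theoretic cohomology class) lets me assume $\lambda$ is faithful and linear after quotienting by $\Ker\lambda$, and that $Z=\Z(G)$ up to a controlled modification. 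By \cite[Theorem~A]{NavarroFully} we already know $G/Z$ is solvable whenever $p\ne 5$, so I may assume throughout that $p=5$.

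The heart of the argument is then the structure of a minimal counterexample at $p=5$, for which I would import the description given in \cite{NavarroFully}. There the authors show that a minimal counterexample has a very restricted shape: the relevant non-abelian composition factor is forced to be one of a short list of simple groups (arising from the CFSG-based analysis), and the fully ramified condition imposes that this simple group admit a fully ramified $5$-Brauer character over a suitable central extension with compatible automorphism action. The key steps are: (i) identify the unique non-abelian chief factor $S$ of $G/Z$ and the form of the covering group and outer automorphisms that can act while preserving the ramification class; (ii) translate the condition $\lambda^G=e\phi$ into a statement about the existence of a $G$-invariant faithful irreducible projective $5$-modular representation of the section realizing $S$, equivalently the non-vanishing of a specific class in $\cohom^2$; and (iii) check, case by case against the candidate simple groups supplied by \cite{NavarroFully}, that no such configuration actually occurs. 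Because the $p\ne 5$ cases are already excluded, only the genuinely $5$-local candidates survive, and these are few enough to dispatch directly.

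The main obstacle I expect is step (iii): verifying that each surviving candidate simple group at $p=5$ fails to carry a $G$-invariant fully ramified $5$-Brauer character on the appropriate central extension. This is essentially a computation in modular character theory of specific quasi-simple groups, where one must compare Brauer character degrees of the covering group against the index $|G:Z|$ and rule out the numerical coincidence $e^2=|G:Z|$ together with $G$-invariance of the unique candidate $\phi$. The delicate point is that the cohomological obstruction can be subtle: a fully ramified character corresponds to a nontrivial Schur multiplier class being killed or realized in a precise way, so I would need the decomposition matrices and the action of the relevant outer automorphism and Galois groups on $\IBr$ in characteristic $5$. Once these finitely many cases are excluded, the minimal counterexample cannot exist, and combined with \cite[Theorem~A]{NavarroFully} for $p\ne 5$ this yields that $G/Z$ is solvable for every prime, completing the proof.
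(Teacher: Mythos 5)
Your reduction to $p=5$ via \cite[Theorem~A]{NavarroFully} and your decision to import the structure of a minimal counterexample from \cite{NavarroFully} match the paper's opening moves exactly. But the heart of your plan --- step (iii), a ``case by case'' check of ``candidate simple groups'' by comparing Brauer character degrees of covering groups against $|G:Z|$ --- misidentifies where the difficulty lies, and as stated it would fail. First, the minimal counterexample at $p=5$ is not built from a short list of simple groups to be checked one at a time: by \cite[Theorem~8.1]{NavarroFully} its layer is $\E(G)=T_1*\cdots*T_m$ with \emph{every} $T_i\cong 6.A_6$ and $m=2^n$ arbitrary, extended by a $2$-group permuting the components transitively. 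So there is one quasi-simple group but infinitely many global configurations, and no finite enumeration disposes of them. Second, and more seriously, no local modular character computation on $6.A_6$ can produce the contradiction you want: $6.A_6$ at $p=5$ genuinely does carry the relevant fully ramified behaviour (the paper even points out that $6.A_6$ is a counterexample to the Brauer-character analogue of Humphreys' conjecture), which is precisely why Navarro--Sp\"ath--Tiep could not eliminate this case. The obstruction must come from the global structure, not from degree arithmetic in the covering group.

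The paper's actual elimination rests on two ideas absent from your proposal. First, a rigidity step: if some $a\in\bigcap_i\N_G(T_i)$ induced an outer automorphism on a component $T_1$, then the extension lemma (\autoref{lemext}) would produce a group $H$ with $T_1\unlhd H$, $|H:T_1|=2$, whose quotient $H/\Z(T_1)$ is $S_6$, $\PSL(2,9)$ or $M_{10}$; none of these admits a $6$-fold Schur extension, so $\bigcap_i\N_G(T_i)=\F^*(G)$ and hence $w:=|G:\F^*(G)|\le 2^{m-1}$ (the Sylow bound for transitive $2$-groups of degree $m$). Second, a counting argument exploiting the fact --- item~(iv) of the structure theorem, which your outline never uses --- that every Sylow $2$-subgroup $P$ of $G$ is of \emph{central type}: taking elements $v_i\in T_i\cap P$ of order $8$ inside the $Q_{16}$ Sylow $2$-subgroups of the components yields an abelian subgroup $A\le P$ with $|P:A|=2^mw$, and the central-type condition $\chi(1)^2=|P:\Z(P)|$ together with $\chi(1)\le|P:A|$ forces $2^{3m}w\le 2^{2m}w^2$, i.e.\ $w\ge 2^m$, contradicting $w\le 2^{m-1}$. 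Without these two steps (or a substitute for them), your proposal does not close the $p=5$ case; it essentially restates the problem that \cite{NavarroFully} left open.
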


Notice that \autoref{mainNST} generalizes the Howlett--Isaacs theorem by choosing a prime which does not divide $|G|$.
Assuming that $G$ is ($p$-)solvable, there exists a fully ramified Brauer character in $Z=\Z(G)$ if and only if $\phi(1)^2=|G:Z|_{p'}$ for some $\phi\in\IBr(G)$ where $n_{p'}$ denotes the $p'$-part of an integer $n$ (see \autoref{proppcentral} below). If this is the case, we call $G$ a group of $p$-\emph{central type}. 
Our second objective is to carry over Gagola's theorem as follows.

\begin{Thm}\label{mainGagola}
Let $G$ be a solvable group and $p$ be a prime. Then there exists a group $H$ of $p$-central type such that $G$ is isomorphic to a subgroup of $H/\Z(H)$. 
\end{Thm}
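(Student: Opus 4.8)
The plan is to reformulate $p$-central type cohomologically and then transport Gagola's construction \cite[Theorem~1.2]{GagolaFully} from $\CC^\times$ to $\overline{\FF_p}^\times$-coefficients. Fix the solvable group $G$ and the prime $p$. By Clifford theory relative to a central subgroup, a $G$-invariant $\lambda\in\IBr(Z)$ with $Z\le\Z(H)$ is fully ramified precisely when the twisted group algebra $\overline{\FF_p}_\alpha(H/Z)$ has a unique simple module, where $\alpha\in\cohom^2(H/Z,\overline{\FF_p}^\times)$ is the class read off from $\lambda$. Combined with \autoref{proppcentral}, this shows it suffices to produce a solvable overgroup $Q\ge G$ together with a non-degenerate class $\alpha\in\cohom^2(Q,\overline{\FF_p}^\times)$ (equivalently, one whose cover has center exactly the kernel) for which $\overline{\FF_p}_\alpha Q$ has a unique simple module of dimension $\sqrt{|Q|_{p'}}$: from such a pair $(Q,\alpha)$ one recovers $H$ as a central extension of $Q$ by a cyclic $p'$-group, and $H$ is solvable because $Q$ is. The key structural point is that the coefficient group is $\overline{\FF_p}^\times\cong(\QQ/\ZZ)_{p'}$, so only roots of unity of order prime to $p$ survive and every class of $p$-power order dies.

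For the construction of $Q$ I would use Gagola's machinery as a geometric backbone, run entirely over fields of characteristic different from $p$. Concretely, all auxiliary modules are taken over a prime field $\FF_\ell$ with $\ell\ne p$ chosen by Dirichlet's theorem so that $\ell\nmid|G|$ and $\FF_\ell$ is a splitting field; the abelian base case uses the evaluation pairing $G_{p'}\times\Hom(G_{p'},\overline{\FF_p}^\times)\to\overline{\FF_p}^\times$, and the inductive step realizes $G$ inside a semidirect product $A\rtimes L$ where $L$ handles a section of smaller derived length and $A$ is the non-trivial part of a regular $\FF_\ell L$-module, so that $A$ is faithful with $\C_A(L)=0$ and admits a regular $L$-orbit. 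The class on $A\rtimes L$ is the product of a hyperbolic pairing on $A\oplus A^\ast$ with the inflation of the inductive class on $L$; non-degeneracy on the $A$-part comes from the pairing and on the $L$-part from induction, while the regular orbit and the faithfulness of $A$ guarantee that the two pieces fit together without creating any new $\alpha$-central element, pinning the center of the cover down to the intended cyclic group. A clean checkpoint is the regime $p\nmid|G|$: since every auxiliary prime is chosen $\ne p$ one keeps $p\nmid|Q|$, whence $\IBr=\Irr$ and the assertion is \emph{literally} Gagola's theorem.

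The genuine difficulty, and the reason the modular statement is not a formal consequence of Gagola's, is the behaviour of the $p$-part when $p\mid|G|$ (so unavoidably $p\mid|Q|$). Because $\lambda$ is trivial on the central $p$-subgroup $Z_p$, the class $\alpha$ records only $p'$-information while $Z_p$ still lies in $\Z(H)$; dually, once $p\mid|Q|$ the algebra $\overline{\FF_p}_\alpha Q$ is non-semisimple, so ``unique simple module of dimension $\sqrt{|Q|_{p'}}$'' must be extracted with the entire $p$-contribution pushed into the Jacobson radical. I would control this through the $p$-solvability of $G$ (hence of $Q$ and $H$): a Fong-type reduction along $\pcore_{p'}(H)$ isolates a $p'$-central-type core to which the construction above applies, while a Sylow $p$-subgroup only enlarges the ramification index $e$ by a $p$-power and feeds into the radical, leaving $\phi(1)^2=|Q|_{p'}$ intact and $\Z(H)$ no larger than allowed. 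I expect this $p$-part bookkeeping — certifying the exact dimension of the unique simple module and that the central $p$-subgroup does not inflate the center — to be the main obstacle, rather than the construction of $Q$, which is essentially Gagola's carried out over $\overline{\FF_p}$.
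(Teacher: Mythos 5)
Your cohomological reformulation (fully ramified over a central $p'$-subgroup $\Leftrightarrow$ the twisted group algebra of $H/Z$ over $\overline{\FF_p}$ has a unique simple module) is correct, and your observation that $\cohom^2(Q,\overline{\FF_p}^\times)$ has no $p$-torsion is the right structural fact to isolate. But the proposal is not a proof: at the one point where the modular statement genuinely differs from Gagola's theorem --- namely $p\mid |G|$ --- you explicitly defer the argument (``the main obstacle''), and the tool you name for it would not do the job. A ``Fong-type reduction along $\pcore_{p'}(H)$'' runs in the wrong direction: Fong reduction analyses a block of a group you already have, whereas here one must \emph{construct} $H$ and certify simultaneously that the twisted algebra keeps a unique simple module of dimension $\sqrt{|Q|_{p'}}$ and that the center of the cover does not grow (which is what the embedding $G\hookrightarrow H/\Z(H)$ needs). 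Since you concede that the case $p\nmid|G|$ is literally Gagola's theorem, the part you leave open is the entire content of \autoref{mainGagola}.

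The paper closes exactly this gap with an elementary mechanism your plan never identifies. The proof of \autoref{ThmGa} is an induction on $|G|$: pick $M\unlhd G$ of prime index $q$, let $K$ be given by induction, form $U=K^q/W$ with $W=\{(x_1,\ldots,x_q)\in\Z(K)^q:x_1\cdots x_q=1\}$, and split into two cases. If $q\ne p$, one follows Gagola: an element $z\in\Z(K)$ of order $q$ twists the shift automorphism, the character $\tau\times 1_Y$ is then \emph{not} invariant, and inducing it to $H=(U\times Y)\rtimes\langle\alpha\rangle$ produces the fully ramified Brauer character (induction supplies precisely the factor $q^2$ required by the degree count $|H:Z|_{p'}=q^2|K:\Z(K)|_{p'}^q$). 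If $q=p$, this twist is impossible --- $\Z(K)$ is a $p'$-group by the inductive hypothesis --- and instead one takes the untwisted extension $H=U\rtimes C_p$: the invariant Brauer character $\tau$ extends \emph{uniquely} to $H$ by Green's theorem on $p$-quotients (\cite[Theorem~8.11]{Navarro}), and $\phi(1)^2=|K:\Z(K)|_{p'}^p=|H:Z|_{p'}$ holds because the $C_p$ on top does not change the $p'$-index. No radical bookkeeping arises; the whole ``$p$-part'' difficulty is absorbed by the dichotomy \emph{extension over $p$-quotients versus induction over $p'$-quotients}, combined with explicit center computations ($\Z(H)=\Z(U)\cong\Z(K)$) and the universal embedding theorem to keep $G$ inside $H/\Z(H)$. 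As a secondary point, the ``hyperbolic pairing on $A\oplus A^\ast$'' backbone you attribute to Gagola is not his inductive construction (which is the wreath-product one above), but that discrepancy matters less than the missing $q=p$ case: as it stands, your argument has a genuine gap precisely where the theorem lives.
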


Further properties of the group $H$ in \autoref{mainGagola} are stated in \autoref{ThmGa} below. Theorems~\ref{mainNST} and \ref{mainGagola} are proved in the next section.

In \autoref{secappl} we apply our results to $p$-blocks $B$ of $G$ with defect group $D$. The omnipresent \emph{nilpotent} blocks introduced by Broué--Puig~\cite{BrouePuig} have a unique irreducible Brauer character, i.\,e. $l(B):=|\IBr(B)|=1$. 
We are interested in non-nilpotent blocks with $l(B)=1$. These occur far less frequent, e.\,g. they cannot be principal blocks. Indeed, it was speculated by Kessar and Linckelmann that all such blocks are Morita equivalent to their Brauer correspondent in $\N_G(D)$ (if $D$ is abelian, this is equivalent to Broué's conjecture by \cite[Theorem~3]{ZimmermannDerived}). Using Külshammer's reduction~\cite{Kpsolv} and \autoref{mainNST}, all such blocks should occur in solvable groups up to Morita equivalence.

Concrete examples can be build as follows: 
If $H$ is a non-trivial $p'$-group of central type acting on an elementary abelian $p$-group $V$ with kernel $\Z(H)$, then $G=V\rtimes H$ has a non-nilpotent $p$-block $B$ with $l(B)=1$ (see proof of \autoref{mainplength} below). 
The smallest instance in terms of $|G|$ is the unique non-principal block of $G\cong C_3^2\rtimes D_8\cong\mathtt{SmallGroup}(72,23)$, which first appeared in Kiyota~\cite{Kiyota}. Kessar~\cite{KessarC3C3} has shown that every non-nilpotent block with $l(B)=1$ and defect group $C_3^2$ is Morita equivalent to this block (see also \cite{KLR}). 
Similar examples with abelian defect group were investigated in \cite{BKL,BensonGreen,HKone,HulB1,LS}. All these blocks arise from groups with a normal abelian Sylow $p$-subgroup. As an application of \autoref{mainGagola}, we illustrate that non-nilpotent blocks with $l(B)=1$ occur in arbitrarily “complicated” solvable groups.

\begin{Thm}\label{mainplength}
Let $p$ be a prime and $l\ge 1$ an integer. Then there exists a solvable group $G$ of $p$-length $l_p(G)\ge l$ such that $G$ has a non-nilpotent $p$-block $B$ of maximal defect with $l(B)=1$.
\end{Thm}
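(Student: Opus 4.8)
The plan is to combine \autoref{mainGagola} with a purely block‑theoretic reading of $p$-central type, so that the group carrying the desired block is itself a group of $p$-central type of large $p$-length. First I would secure the $p$-length: fix any solvable group $X$ with $l_p(X)\ge l$ (such groups exist for every $l$), and apply \autoref{mainGagola} to obtain a solvable group $G$ of $p$-central type together with an embedding of $X$ into $G/\Z(G)$. Since the $p$-length of a solvable group does not increase under passing to subgroups or quotients, this already yields $l_p(G)\ge l_p(G/\Z(G))\ge l_p(X)\ge l$. It then remains to locate the required block inside $G$.

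Write $Z:=\Z(G)$ and let $\lambda\in\IBr(Z)$ be the fully ramified Brauer character witnessing that $G$ is of $p$-central type, so that $\lambda^G=e\phi$ for some $\phi\in\IBr(G)$ with $\phi(1)^2=|G:Z|_{p'}$. I take $B$ to be the $p$-block of $G$ containing $\phi$ and claim it has the three required properties. For $l(B)=1$: as $Z$ is central and abelian, its block containing $\lambda$ is $\{\lambda\}$, and every irreducible Brauer character in $B$ lies over this block, hence over $\lambda$; full ramification means that $\phi$ is the \emph{only} irreducible Brauer character of $G$ lying over $\lambda$, so $\IBr(B)=\{\phi\}$ and $l(B)=1$. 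For maximal defect: since $\phi(1)^2=|G:Z|_{p'}$ is a $p'$-number, $\phi(1)$ is prime to $p$; as every $\psi\in\IBr(B)$ satisfies $\nu_p(\psi(1))\ge\nu_p(|G|)-d(B)$, applying this to $\phi$ forces $d(B)=\nu_p(|G|)$, i.e.\ a defect group of $B$ is a Sylow $p$-subgroup of $G$.

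The main obstacle is non‑nilpotency, because $l(B)=1$ does not by itself exclude it: every nilpotent block has a unique irreducible Brauer character. Thus the heart of the argument is to show that $B$ is not nilpotent. I would do this by analysing the local structure: a nilpotent block has trivial inertial quotient and fusion system equal to that of its defect group, whereas the central-type datum forces a genuine non-trivial $p'$-action, so that the inertial quotient $\N_G(D,b_D)/D\,\C_G(D)$ is non-trivial. Concretely, I expect it is cleanest to realise $G$, or the relevant section of it, in the semidirect form $V\rtimes H$ from the introduction—with $V$ an elementary abelian $p$-group on which a group of central type acts with kernel its centre—so that the $p'$-action producing the ramification becomes visible in the normaliser of a defect group; the smallest instance of this phenomenon is Kiyota's block in $C_3^2\rtimes D_8$. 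Pinning down this non-triviality of the fusion/inertial data for the high-$p$-length groups furnished by \autoref{mainGagola} is where the real work lies; once it is established, $B$ is a non-nilpotent block of maximal defect with $l(B)=1$ in a solvable group of $p$-length at least $l$, which completes the proof.
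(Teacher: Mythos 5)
Your handling of the $p$-length, of $l(B)=1$, and of maximal defect is correct (the paper gets the latter two simultaneously from Fong's theorem, \cite[Theorem~10.20]{Navarro}, but your height argument for Brauer characters also works, e.g.\ via Fong--Swan in the $p$-solvable case). The genuine gap is exactly the step you defer: non-nilpotency is never proved, and the route you sketch --- taking $B$ inside the $p$-central type group $G$ furnished by \autoref{mainGagola} and then hoping to exhibit the semidirect structure $V\rtimes H$ \emph{inside} that group --- cannot be completed as stated. Nothing in the $p$-central type property forces the distinguished block to be non-nilpotent: every $p$-group is of $p$-central type and its unique block is nilpotent. Even after arranging $l_p(G)\ge 2$, which rules out $p$-nilpotency of $G$ and hence (by Broué--Puig \cite{BrouePuig}) nilpotency of the \emph{principal} block, your $B$ is a non-principal block and could a priori still be nilpotent; the iterated wreath-product groups produced in the proof of \autoref{ThmGa} carry no visible normal $p$-subgroup with a non-$p$-group of operators on which to hang a subpair argument, so the ``relevant section'' you hope to locate need not exist inside $G$ at all.

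The paper resolves this by enlarging the group rather than analyzing it: take the solvable group $H$ of $p$-central type with $l_p(H)\ge l$ and $Z:=\Z(H)$ a $p'$-group (this is where the extra properties of \autoref{ThmGa}, beyond \autoref{mainGagola}, are needed), let $H/Z$ act faithfully on an elementary abelian $p$-group $V$ (e.g.\ the regular module), and set $G:=V\rtimes H$. Then $\IBr(G)=\IBr(H)$ because $V=\pcore_p(G)$ lies in the kernel of every Brauer character, so $\lambda$ remains fully ramified in $G$; Fong's theorem yields a block $B$ of maximal defect with $\IBr(B)=\IBr(G|\lambda)$, hence $l(B)=1$; and non-nilpotency is immediate from the $B$-subpair $(V,b)$: one has $\N_G(V,b)=G$ and $G/\C_G(V)\cong H/Z$, which is not a $p$-group once one assumes $l\ge 2$ (as one may without loss of generality --- a reduction you would also need, since for $l=1$ your choice of $X$ could even make $H/Z$ a $p$-group). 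So the missing idea is not a finer analysis of the inertial or fusion data of the group coming from \autoref{mainGagola}, but the construction of an auxiliary faithful $p$-module on top of it, which makes the non-trivial $p'$-action visible by design; you in fact quote this very construction from the introduction, but apply it in the wrong direction.
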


On the other hand, we construct some non-solvable examples using the following recipe.

\begin{Thm}\label{mainnonsolv}
Let $b$ be a $p$-block of a finite group $H$ such that $l(b)$ is not a $p$-power (in particular $l(b)>1$). Suppose that there exists an automorphism group $A\le\Aut(H)$ such that $A$ acts regularly on $\IBr(b)$. Let $Q$ be a $p$-group upon $A$ acts non-trivially. Then the group $G=(H\times Q)\rtimes A$ where $A$ acts diagonally has a non-nilpotent $p$-block $B$ with $l(B)=1$.
\end{Thm}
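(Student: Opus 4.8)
The plan is to realise $B$ as the unique block of $G$ lying over a suitable block of $H\times Q$, to read off $l(B)=1$ from Clifford theory, and then to detect non-nilpotency from the $p'$-part of the action of $A$. Set $N:=H\times Q\unlhd G$, so $G/N\cong A$. Since $Q$ is a $p$-group its group algebra is local and $\IBr(Q)=\{1_Q\}$; hence the blocks of $N$ are the $b'\otimes b_0$ with $b'\in\Bl(H)$ and $b_0$ the unique block of $Q$, and $\IBr(b'\otimes b_0)=\{\phi\otimes 1_Q:\phi\in\IBr(b')\}$. Put $\tilde b:=b\otimes b_0$. As $A$ fixes $b$ and $b_0$, the block $\tilde b$ is $G$-invariant, $A$ acts on $\IBr(\tilde b)$ regularly through its action on $\IBr(b)$, and $l(\tilde b)=l(b)$. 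For $\phi\in\IBr(\tilde b)$ the inertia group $G_\phi$ contains $N$, and its image $A_\phi$ in $G/N\cong A$ is the stabiliser of $\phi$ under the regular action, hence trivial; thus $G_\phi=N$ and the Clifford correspondence for Brauer characters gives $\phi^G\in\IBr(G)$. Transitivity yields $\phi^G=\psi^G=:\Phi$ for all $\phi,\psi\in\IBr(\tilde b)$, so $\IBr(G\mid\tilde b)=\{\Phi\}$. Since $\tilde b$ is $G$-invariant, every block of $G$ covering it contains a character from $\IBr(G\mid\tilde b)$; hence there is a unique such block $B$, with $\IBr(B)=\{\Phi\}$, i.e.\ $l(B)=1$.

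It remains to prove that $B$ is not nilpotent, and the hypothesis that $l(b)=|A|$ is not a power of $p$ (rather than merely $l(b)>1$) signals that this must be done through the $p'$-part of $A$. A nilpotent block has trivial inertial quotient $\N_G(D,b_D)/D\,\C_G(D)=1$ for a defect group $D$ and a subpair $(D,b_D)$, so it suffices to show this $p'$-group is non-trivial. Here the role of $Q$ becomes visible: being a normal $p$-subgroup, $Q\le\pcore_p(G)\le D$ and $\N_G(Q)=G$, so $Q$ lies inside every defect group and I may study the subpair $(Q,b_Q)$. Passing to $\C_G(Q)$, the same construction reappears with the proper subgroup $\C_A(Q)\lneq A$ (proper because $A$ acts non-trivially on $Q$) in place of $A$; this subgroup still acts freely but no longer transitively on $\IBr(b)$, so the blocks of $\C_G(Q)$ over $\tilde b$ carry $|A:\C_A(Q)|>1$ irreducible Brauer characters. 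Equivalently, a non-trivial $p'$-element of $A$ --- which exists precisely because $|A|=l(b)$ is not a $p$-power --- should survive in the inertial quotient, forcing $\N_G(D,b_D)/D\,\C_G(D)\ne 1$ and hence $B$ non-nilpotent.

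I expect this last step to be the main obstacle. What must be established is that the local block $b_Q$ of $\C_G(Q)$ attached to $B$ genuinely has $l(b_Q)>1$ --- rather than the $|A:\C_A(Q)|$ Brauer characters scattering over several blocks each with a single one --- or, equivalently, that the chosen $p'$-element really stabilises the relevant Brauer correspondent. This requires the full Clifford theory of blocks over $N$: the determination of $D$ via Knörr's theorem, the compatibility of Brauer pairs in $N$, $\C_G(Q)$ and $G$, and the stability of the local blocks under the fusing $p'$-elements. It is exactly here that the normality of the $p$-group $Q$ (guaranteeing a well-behaved subpair $(Q,b_Q)$ inside each defect group) and the non-$p$-power hypothesis on $l(b)$ (guaranteeing a non-trivial $p'$-contribution) are indispensable, whereas everything up to $l(B)=1$ is routine Clifford theory.
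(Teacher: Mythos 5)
Your construction of $B$ and your proof that $l(B)=1$ are correct and agree with the paper's argument: the blocks of $N=H\times Q$ are the $b'\otimes b_0$ because the $p$-group $Q$ has a unique block, $\tilde b:=b\otimes b_0$ is $G$-invariant, regularity of the action gives $G_\phi=N$ for every $\phi\in\IBr(\tilde b)$, the Clifford correspondence makes $\phi^G$ irreducible and independent of $\phi$, and disjointness of blocks then forces a unique block $B$ covering $\tilde b$, with $\IBr(B)=\{\Phi\}$. (The paper invokes \cite[Corollary~9.21]{Navarro} for the uniqueness of the covering block; your self-contained derivation of this point is fine.)

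However, the theorem is not proved in your proposal: non-nilpotency of $B$ is never established. You describe a strategy (pass to the subpair $(Q,b_Q)$ with $b_Q$ a block of $\C_G(Q)$, show a $p'$-element survives in the inertial quotient) and then explicitly defer the decisive step as ``the main obstacle'', namely whether the chosen $p'$-element stabilizes the relevant Brauer correspondent, or whether the Brauer characters over $b$ scatter over several blocks of $\C_G(Q)$. This is a genuine gap, and it is exactly the point where the paper's proof has its (short) content. The idea you are missing is that one need not identify any block of $\C_G(Q)$ at all: since $Q\unlhd G$, since $b_0$ is the unique block of the $p$-group $Q$, and since $H\le\C_G(Q)$, the pair $(Q,b\otimes b_0)$ is itself a $B$-subpair, and because $b\otimes b_0$ is $G$-invariant its stabilizer is all of $G$. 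The Broué--Puig criterion (every subpair $(P,e)$ of a nilpotent block has $\N_G(P,e)/\C_G(P)$ a $p$-group) therefore fails at this one canonical subpair: nilpotency of $B$ would force $G/HQ\cong A$ to be a $p$-group, contradicting the hypothesis that $|A|=l(b)$ is not a $p$-power. In other words, the ``scattering'' you worry about cannot arise, because the second component of the subpair is taken to be the invariant block of $HQ$ itself rather than a block of $\C_G(Q)$ that a $p'$-element might fail to fix; no Knörr-type determination of the defect group and no compatibility analysis of Brauer pairs in $N$, $\C_G(Q)$ and $G$ is needed. Your alternative sufficient criterion (non-trivial inertial quotient at a maximal subpair $(D,b_D)$) is legitimate but would be harder to verify here; the paper deliberately works at the normal $p$-subgroup $Q$, where invariance makes the stabilizer computation immediate. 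As submitted, your argument proves only $l(B)=1$ and hence does not prove the statement.
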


Finally we study lifts of Brauer characters. By the Fong--Swan theorem, every irreducible Brauer character $\phi$ of a $p$-solvable group $G$ is the restriction of an ordinary character to the set of $p$-regular elements.  
This is no longer true in non-solvable groups. However, if $\IBr(B)=\{\phi\}$, then Malle--Navarro--Späth~\cite{MNS} proved (using the CFSG) that $\phi$ still has such a lift. For nilpotent blocks with defect group $D$, the number of these lifts is $|D:D'|$. In particular, $\phi$ has at least $p^2$ lifts unless $|D|\le p$. For non-nilpotent blocks, it was conjectured by Malle--Navarro~\cite{MNchardeg} (in combination with Olsson's conjecture) that the number of lifts is strictly less than $|D:D'|$. Answering a question of G.~Navarro, we construct (non-nilpotent) blocks with $\IBr(B)=\{\phi\}$ such that $\phi$ has a \emph{unique} lift.

\section{Groups of \texorpdfstring{$p$}{p}-central type}

For a finite group $G$ we denote the Fitting subgroup by $\F(G)$ and the layer by $\E(G)$ (the product of all components of $G$). Then the generalized Fitting subgroup is a central product $\F^*(G)=\F(G)*\E(G)$. Our notation for characters follows Navarro's book~\cite{Navarro}. 

We need the following lemma from the theory of group extensions (see \cite[Theorem~15.3.1]{Hall}).

\begin{Lem}\label{lemext}
Let $N$ be a finite group, $\alpha\in\Aut(N)$ and $m\in\NN$. Then the following assertions are equivalent:
\begin{enumerate}[(1)]
\item There exists a finite group $H$ such that $N\unlhd H$, $H/N=\langle hN\rangle\cong C_m$ and $\alpha(x)=hxh^{-1}$ for all $x\in N$.
\item There exists $n\in N$ such that $\alpha(n)=n$ and $\alpha^m(x)=nxn^{-1}$ for all $x\in N$.
\end{enumerate}
\end{Lem}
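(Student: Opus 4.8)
The plan is to recognize this as the standard classification of cyclic group extensions and to read condition~(2) as prescribing the element that will serve as the $n$-th power of a coset representative. In any group $H$ as in~(1) the coset $xN$ has order $n$, so $x^n$ lies in $N$, and condition~(2) records precisely the two compatibility constraints that such an $n$-th power must satisfy. I would prove the two implications separately, the construction in (2)$\Rightarrow$(1) being the substantial direction.

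For (1)$\Rightarrow$(2) I would argue by direct computation. Writing $y\in H$ for a generator of $H/N$ with $ygy^{-1}=\alpha(g)$, the relation $(yN)^n=N$ gives $y^n\in N$, and I set $z:=y^n$. Then
\[
\alpha(z)=yzy^{-1}=y\,y^n\,y^{-1}=y^n=z,
\]
while for $g\in N$ one has $\alpha^n(g)=y^n g y^{-n}=zgz^{-1}$. Thus $z$ plays the role of the element required in~(2).

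For (2)$\Rightarrow$(1) I would build $H$ by hand. Form the semidirect product $E:=N\rtimes\langle t\rangle$ in which $\langle t\rangle\cong\ZZ$ is infinite cyclic and $tgt^{-1}=\alpha(g)$ for $g\in N$, and set $c:=t^nx^{-1}\in E$. The key claim is that $c$ is central in $E$. Commutation with $t$ follows from $\alpha(x)=x$, since $tct^{-1}=t^n\alpha(x)^{-1}=t^nx^{-1}=c$; commutation with each $g\in N$ reduces, after pushing $g$ through $t^n$ via $gt^n=t^n\alpha^{-n}(g)$, to the identity $\alpha^{-n}(g)=x^{-1}gx$, which is exactly the hypothesis $\alpha^n(g)=xgx^{-1}$ rewritten. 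Granting this, $\langle c\rangle$ is an infinite cyclic central subgroup, and projecting $E$ onto $\langle t\rangle\cong\ZZ$ (sending $c\mapsto n$) shows $\langle c\rangle\cap N=1$. I then take $H:=E/\langle c\rangle$: the image of $N$ is isomorphic to $N$, the image $\bar t$ of $t$ satisfies $\bar t^{\,n}=x$, conjugation by $\bar t$ induces $\alpha$, and $H/N\cong\ZZ/n\ZZ\cong C_n$ is generated by $\bar tN$, so that $|H|=n|N|$ is finite.

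The main obstacle is the centrality of $c$: this is the single point where both conditions on $x$ in~(2) are consumed, and everything else is bookkeeping. A secondary point to check carefully is that $\bar tN$ has order exactly $n$ in $H/N$ rather than a proper divisor; this is guaranteed by $\langle c\rangle\cap N=1$ together with the computation of the image of $\langle c\rangle$ under $E\to\langle t\rangle$.
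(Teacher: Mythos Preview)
Your argument is correct. The paper itself does not supply a proof of this lemma; it simply cites \cite[Theorem~15.3.1]{Hall} for the result. Your write-up is the standard treatment of cyclic extensions: the forward direction reads off $x=y^n$ from a coset representative, and the reverse direction builds $H$ as a quotient of $N\rtimes\ZZ$ by the central element $t^nx^{-1}$, which is essentially the construction one finds in Hall. There is nothing to add or repair; the centrality computation and the check that $\langle c\rangle\cap N=1$ are both fine, and the order of $\bar tN$ in $H/N$ is handled exactly as you say.
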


\begin{proof}[Proof of \autoref{mainNST}]
By \cite[Theorem~A]{NavarroFully}, we may assume that $p=5$. 
Let $G$ be a minimal counterexample with respect to $|G:Z|$. Then by \cite[Theorem~8.1 and its proof]{NavarroFully}, the following holds:
\begin{enumerate}[(i)]
\item $Z=\Z(G)=\F(G)$ is a cyclic $\{2,3\}$-group.
\item $\E(G)=T_1*\ldots*T_m$ where $T_1\cong\ldots\cong T_m\cong 6.A_6$ and $\Z(T_1)=\ldots=\Z(T_m)\cong C_6$. (In the notation of \cite{NavarroFully} we have $T_i=S_i'$.)
\item $G/\F^*(G)$ is a $2$-group, which permutes the $T_i$ transitively. In particular, $m=2^n$ for some $n\ge 0$.
\item\label{sylct} Every Sylow $2$-subgroup of $G$ is of central type.
\end{enumerate}
Let $N:=\bigcap_{i=1}^m\N_G(T_i)\unlhd G$ and $a\in N$. Suppose by way of contradiction that $a\notin\F^*(G)$. Since $\F^*(G)$ is self-centralizing and $\Out(T_i)\cong C_2^2$, we have $a^2\in\F^*(G)$. Let $t_i\in T_i$ and $z\in Z$ such that $a^2=t_1\ldots t_mz$.
If $a$ induces an inner automorphism on every $T_i$, then we have the contradiction $a\in\F^*(G)\C_G(\F^*(G))\le\F^*(G)$. 
Thus, without loss of generality we may assume that $a$ induces an outer automorphism $\alpha$ of $T_1$. Then $\alpha(t_1)=t_1$ and $\alpha^2$ is the inner automorphism on $T_1$ induced by $t_1$. Hence, by \autoref{lemext}, there exists a group $H$ with $T_1\unlhd H$ and $H/T_1=\langle hT_1\rangle\cong C_2$ where $h$ acts as $\alpha$ on $T_1$. Since $\Z(T_1)=\Z(\E(G))\le\F(G)=\Z(G)$, we have $\Z(Z_1)\le\Z(H)$. Moreover, $H/\Z(T_1)$ is isomorphic to one of the three subgroups of $\Aut(A_6)$ of index $2$, namely $S_6$, $\PGL(2,9)$ or the Mathieu group $M_{10}$. However, none of these three groups has a $6$-fold Schur extension as can be checked with GAP~\cite{GAPnew}. This contradiction shows that $N=\F^*(G)$. 

Now the $2$-group $G/N$ is a transitive permutation group of degree $m$. Therefore, $|G:N|$ is bounded by the order of a Sylow $2$-subgroup of the symmetric group $S_m$. This yields 
\begin{equation}\label{eqw}
w:=|G:N|\le2^{\sum_{i=1}^{n}\frac{m}{2^i}}=2^{1+2+\ldots+2^{n-1}}=2^{m-1}.
\end{equation}
Let $P$ be a Sylow $2$-subgroup of $G$ and $Z_2=P\cap Z\le\Z(P)$. Observe that $\C_P(P\cap N)\le N$. Since $T_1$ has Sylow $2$-subgroups isomorphic to $Q_{16}$, we have $P\cap N\cong Q_{16}*\ldots*Q_{16}*Z_2$ and consequently $\Z(P)\le\C_P(P\cap N)=Z_2$. Thus, $\Z(P)=Z_2$.
Choose $v_i\in T_i\cap P$ of order $8$ for $i=1,\ldots,m$. Since $[T_i,T_j]=1$ for $i\ne j$, $A:=\langle v_1,\ldots,v_m\rangle Z_2\le P$ is abelian with $|P:A|=|G:N||P\cap N:A|=2^mw$.
By part~\eqref{sylct}, there exists $\chi\in\Irr(P)$ such that $\chi(1)^2=|P:Z_2|$. For a (linear) constituent $\mu\in\Irr(A)$ of the restriction $\chi_A$ we have $\chi(1)\le\mu^P(1)=|P:A|$. Hence,
\[2^{3m}w=|G:N||N:Z|_2=|P:Z_2|=\chi(1)^2\le|P:A|^2=2^{2m}w^2\]
and $w\ge 2^m$. This contradicts \eqref{eqw}.
\end{proof}

The original conjecture by Iwahori--Matsumoto has been generalized by J.~F.~Humphreys (as mentioned in \cite{Higgs}) and independently by Navarro~\cite[Conjecture~11.1]{NavarroProblems} to the following statement: Let $N\unlhd G$ and $\lambda\in\Irr(N)$ be $G$-invariant such that all irreducible constituents of $\lambda^G$ have the same degree. Then $G/N$ is solvable. This conjecture is still open, but the corresponding version for Brauer characters does not hold. Indeed the Schur cover $6.A_6$ considered in the proof above is a counterexample for $p=5$. 

Next, we prove the equivalent characterization of groups of $p$-central type mentioned in the introduction.

\begin{Prop}\label{proppcentral}
For a $p$-solvable group $G$ with center $Z:=\Z(G)$ the following assertions are equivalent:
\begin{enumerate}[(1)]
\item There exists some $\phi\in\IBr(G)$ such that $\phi(1)^2=|G:Z|_{p'}$.
\item There exists a fully ramified Brauer character $\lambda\in\IBr(Z)$.
\end{enumerate}
\end{Prop}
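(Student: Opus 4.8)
The plan is to route both implications through the set $\IBr(G\mid\lambda)$ of irreducible Brauer characters lying over a fixed linear $\lambda\in\IBr(Z)$. Since $Z$ is central every such $\lambda$ is automatically $G$-invariant and satisfies $\phi_Z=\phi(1)\lambda$ for each $\phi$ over $\lambda$, and $\IBr(G\mid\lambda)$ is precisely the set of irreducible constituents of $\lambda^G$; thus condition~(2) for this $\lambda$ says exactly that $|\IBr(G\mid\lambda)|=1$. Writing $Z=Z_p\times Z_{p'}$, the $p$-part $Z_p$ is a central $p$-subgroup lying in the kernel of every Brauer character, so $\IBr(G)=\IBr(G/Z_p)$ and I may pass to $\bar G:=G/Z_p$ with central $p'$-subgroup $\bar Z:=Z/Z_p$ and the corresponding faithful $\bar\lambda$; note $|\bar G:\bar Z|=|G:Z|$ and that $\bar Z$ need only be central (not all of $\Z(\bar G)$) for what follows.

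First I would record two module-theoretic facts about $\bar U:=\mathrm{Ind}_{\bar Z}^{\bar G}M_{\bar\lambda}$, the module affording $\lambda^G$ over a splitting field $F$ of characteristic $p$. Because $\bar Z$ is a $p'$-group, $M_{\bar\lambda}$ is projective over $F\bar Z$, so $\bar U$ is projective; computing its head via Frobenius reciprocity, $\dim\operatorname{Hom}_{F\bar G}(\bar U,S_\phi)=\dim\operatorname{Hom}_{F\bar Z}(M_{\bar\lambda},(S_\phi)_{\bar Z})=\phi(1)$ for $\phi\in\IBr(\bar G\mid\bar\lambda)$, whence $\bar U\cong\bigoplus_{\phi\mid\lambda}P_\phi^{\phi(1)}$ with $P_\phi$ the projective cover of $S_\phi$. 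Secondly, for a Sylow $p$-subgroup $\bar P$ of $\bar G$ one has $\bar Z\cap\bar P=1$, so by Mackey $\mathrm{Res}_{\bar P}\bar U$ is free of rank $|\bar G:\bar Z\bar P|=|G:Z|_{p'}$; since any $P_\phi$ restricts freely to $\bar P$ of rank $r_\phi:=\dim P_\phi/|G|_p\in\ZZ_{\ge1}$, this yields the key identity
\[
\sum_{\phi\in\IBr(G\mid\lambda)}\phi(1)\,r_\phi=|G:Z|_{p'}.
\]

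For $(2)\Rightarrow(1)$, if $\IBr(G\mid\lambda)=\{\phi\}$ then $\bar U=P_\phi^{\phi(1)}$ has $S_\phi$ as its only composition factor, so the block $B$ of $\phi$ has $l(B)=1$ and hence $[P_\phi:S_\phi]=|D|$ for a defect group $D$, giving $\dim P_\phi=|D|\phi(1)$. The identity becomes $|D|\phi(1)^2=|G:Z|_{p'}|G|_p=|G:Z|$, whose $p$-part reads $|D|\phi(1)_p^2=|G|_p$; and freeness of $\mathrm{Res}_{\bar P}P_\phi$ forces $|G|_p\mid|D|\phi(1)$, i.e. $|G|_p\le|D|\phi(1)_p$. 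Combining the two gives $\phi(1)_p\le1$, so $\phi(1)_p=1$, $|D|=|G|_p$, and $\phi(1)^2=|G:Z|_{p'}$. (This direction does not use $p$-solvability.) For $(1)\Rightarrow(2)$, suppose $\phi(1)^2=|G:Z|_{p'}$, so $\phi(1)_p=1$, and set $\lambda=\phi_Z/\phi(1)$. The whole argument now rests on the lower bound
\[
(\star)\qquad r_\psi=\dim P_\psi/|G|_p\ \ge\ \psi(1)_{p'}\qquad(\psi\in\IBr(G\mid\lambda)),
\]
equivalently $|G|_p\,\psi(1)_{p'}\mid\dim P_\psi$. Granting $(\star)$, the identity gives
\[
\phi(1)^2=\sum_{\psi\mid\lambda}\psi(1)r_\psi\ge\sum_{\psi\mid\lambda}\psi(1)_p\,\psi(1)_{p'}^2\ge\sum_{\psi\mid\lambda}\psi(1)_{p'}^2\ge\phi(1)_{p'}^2=\phi(1)^2,
\]
so all inequalities are equalities; the last one forces $\IBr(G\mid\lambda)=\{\phi\}$, which is condition~(2).

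The main obstacle is $(\star)$, and this is exactly where $p$-solvability is indispensable: without it the divisibility $\psi(1)_{p'}\mid\dim P_\psi/|G|_p$ can fail. I would prove it by inducting along a chief series of the $p$-solvable group $G$, running Clifford theory of Brauer characters through the $\pcore_{p'}(G)$- and $\pcore_p(G)$-layers (Fong's reductions) to push the divisibility down to smaller sections, where it becomes transparent; the base cases are $p'$-groups (where $\dim P_\psi=\psi(1)$ and $|G|_p=1$) and blocks of defect zero (where $\dim P_\psi=\psi(1)=|G|_p\psi(1)_{p'}$). Verifying that the central, $G$-invariant character $\lambda$ is preserved under these reductions, and that the inductive hypothesis transports the divisibility through the relevant character-triple isomorphisms, is the technical heart of the argument.
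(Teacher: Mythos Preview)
Your direction $(2)\Rightarrow(1)$ is correct and in fact gives a self-contained argument where the paper simply cites \cite[Theorem~2.1]{NavarroFully}. The identity $\sum_{\psi}\psi(1)r_\psi=|G:Z|_{p'}$ together with the Cartan-matrix computation for $l(B)=1$ is a clean way to extract $\phi(1)^2=|G:Z|_{p'}$ without invoking $p$-solvability.

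For $(1)\Rightarrow(2)$ your route is genuinely different from the paper's, but it is incomplete precisely at the step you flag as ``the main obstacle'': you do not prove $(\star)$, only outline an induction through $\pcore_p$/$\pcore_{p'}$-layers and Fong reductions. That induction is plausible, but the character-triple step (when $\theta\in\IBr(\pcore_{p'}(G))$ is $G$-invariant) requires you to check that $\dim P_\psi/(|G|_p\,\psi(1)_{p'})$ is preserved under the isomorphism $(G,N,\theta)\to(G^*,Z^*,\lambda^*)$, and you have not done this. Until $(\star)$ is nailed down your chain of inequalities does not close.

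The paper avoids all of this by exploiting the $p$-complement $H$ directly: since $\phi(1)$ is $p'$, \cite[Theorem~10.9]{Navarro} gives $\phi_H\in\Irr(H)$; then $\phi_H(1)^2=|H:Z_{p'}|$, so by the ordinary central-type lemma $\lambda_{p'}$ is fully ramified in $H$; finally any $\phi'\in\IBr(G\mid\lambda)$ is determined by $\phi'_H$, which must be a multiple of $\phi_H$, forcing $\phi'=\phi$. This is a few lines and uses $p$-solvability only to produce $H$ and to invoke Theorem~10.9. If you want to salvage your approach, note that the same $p$-complement actually gives $(\star)$ quickly for the characters you need: writing $\psi_H=\sum m_\theta\theta$ one has $\Phi_\psi(1)\ge |G|_p\cdot\max_\theta\theta(1)\ge|G|_p\,\psi(1)_{p'}$ once you know how $\Phi_\psi$ relates to induction from $H$; but at that point you are essentially reproducing the paper's idea inside a longer framework.
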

\begin{proof}
It has been shown in \cite[Theorem~2.1]{NavarroFully} that (2) implies (1). The other implication was claimed in \cite{NavarroFully} without proof. So assume that (1) holds and choose a constituent $\lambda\in\IBr(Z)$ of $\phi_Z$. Since $G$ is $p$-solvable, there exists a $p$-complement $H\le G$. 
Since $\phi(1)$ has $p'$-degree, the restriction $\phi_H$ is irreducible by \cite[Theorem~10.9]{Navarro}. We may consider $\phi_H$ as an ordinary character. 
Let $Z=Z_p\times Z_{p'}$ and $\lambda=\lambda_p\times\lambda_{p'}$ where $Z_p$ is the Sylow $p$-subgroup of $Z$ and $\lambda_p\in\Irr(Z_p)$. Then $\phi_H(1)^2=|H:Z_{p'}|$ and $\lambda_{p'}^H=e\phi_H$ where $e=\phi(1)$ by \cite[Lemma~8.2]{Navarro2}. In particular, $\phi_H$ is the only character of $H$ lying over $\lambda_{p'}$. Let $\phi'\in\IBr(G)$ be a constituent of $\lambda^G$. Then $\phi'_H$ is a multiple of $\phi_H$. But since $H$ is a $p$-complement, $\phi'$ is uniquely determined by $\phi'_H$. It follows that $\phi'$ is a multiple of $\phi$, but then of course $\phi'=\phi$. Therefore, $\lambda$ is fully ramified in $G$.
\end{proof}

It has been remarked in \cite{NavarroFully} that \autoref{proppcentral} does not hold for the non-solvable group $G=\SL(2,17)$ when $p=17$. Another counterexample with a different flavor is the direct product $G=\SL(2,5)^2$ for $p=5$. Indeed, $\SL(2,5)$ has Brauer characters of degree $3$ and $4$. So $G$ has a Brauer character of degree $12$. 

Now we prove a strong version of \autoref{mainGagola}.

\begin{Thm}\label{ThmGa}
Let $G$ be a solvable group and $p$ be a prime. Then there exists a solvable $H$ with the following properties:
\begin{enumerate}[(i)]
\item $Z:=\Z(H)$ has square-free $p'$-order.
\item There exists a faithful Brauer character $\phi\in\IBr(H)$ with $\phi(1)^2=|H:Z|_{p'}$. In particular, $H$ has $p$-central type.
\item $G$ is isomorphic to a subgroup of $H/Z$.
\item $|G|$, $|H|$ and $|Z|$ have the same prime divisors apart from $p$.
\end{enumerate} 
\end{Thm}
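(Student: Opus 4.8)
The plan is to reduce, via \autoref{proppcentral}, the assertion to the construction of a solvable group $H$ carrying a \emph{fully ramified} Brauer character $\lambda\in\IBr(Z)$ over its center $Z=\Z(H)$, subject to the bookkeeping constraints (i), (iii) and (iv); condition (ii) then follows once we also arrange $\pcore_p(H)=1$ (which is forced, since $\pcore_p(H)$ lies in the kernel of every Brauer character, and is what makes a faithful $\phi$ possible) together with $\phi(1)^2=|H:Z|_{p'}$. This is the exact $p$-modular analogue of Gagola's situation \cite[Theorem~1.2]{GagolaFully}, and I would follow his strategy of assembling $H$ along a chief series of $G$, inserting one fully ramified layer per chief factor, but with the primes carefully separated so that the accumulated center stays square-free and avoids $p$.

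The decisive new ingredient, absent in the ordinary case, is a purely $p$-modular transport principle for full ramification. Suppose $N\unlhd H$ is a $p'$-group, $\theta\in\IBr(N)$ is $H$-invariant, and $H/N$ is a $p$-group. By the Clifford theory of Brauer characters the set $\IBr(H\mid\theta)$ is parametrized by the irreducible projective Brauer characters of $H/N$ attached to a cocycle $\alpha\in\cohom^2(H/N,\overline{\FF}_p^\times)$; since $\overline{\FF}_p^\times$ is a $p'$-group (hence uniquely $p$-divisible) and $H/N$ is a $p$-group, this cohomology vanishes, so $\alpha$ is trivial, and the ordinary group algebra $\overline{\FF}_p[H/N]$ of a $p$-group has a unique simple module. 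Hence $|\IBr(H\mid\theta)|=1$, and if a faithful $\lambda\in\IBr(\Z(N))$ is already fully ramified in $N$ then it remains fully ramified in $H$. Thus a $p'$-group $N$ of central type (an essentially ordinary construction) may be capped by any $p$-group of automorphisms fixing $\theta$ without destroying full ramification — this is what allows $|H|$ to be divisible by $p$ while $Z$ stays a $p'$-group.

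Concretely I would induct on $|G|$. Pick a minimal normal subgroup $M\cong C_r^k$ of $G$, set $\bar G=G/M$, and let induction supply a group $\bar H$ with all the desired properties. If $r\ne p$ and $r$ is a new prime, I would adjoin a fresh Heisenberg-type $r$-layer: an extraspecial $r$-group $E_r$ with $\Z(E_r)\cong C_r$ and $E_r/\Z(E_r)\cong M\oplus M^\ast$ equipped with the hyperbolic $G$-invariant symplectic form, so that a faithful $\lambda_r\in\IBr(\Z(E_r))$ is fully ramified in $E_r$ and $\Z(E_r)$ contributes a single cyclic factor $C_r$ to the center, keeping it square-free; if $r\ne p$ already divides $|\bar H|$ I would instead enlarge the existing $r$-layer by extending its symplectic space, so that no second $C_r$ appears. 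If $r=p$, no new center is created: the factor is realized inside the capping $p$-group acting faithfully on the $p'$-part, which both preserves $\pcore_p(H)=1$ and, by the transport principle, preserves full ramification. In each case the required (non-split) extensions adjoining the new generators with their prescribed automorphism action are produced by \autoref{lemext}, the embedding $\bar G\hookrightarrow\bar H/\bar Z$ is extended to $G\hookrightarrow H/Z$, and the product character $\phi=\prod_r\phi_r$ is faithful with $\phi(1)^2=\prod_r|E_r:\Z(E_r)|^{1/2}{}^{\,2}=|H:Z|_{p'}$.

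The main obstacle is exactly the point where the ordinary Gagola construction and the modular transport principle must coexist. For a general solvable $G$ that is \emph{not} $p$-nilpotent one cannot simply place a normal $p'$-group of central type at the bottom and a $p$-group on top, because that forces $H/Z$ — and hence its subgroup $G$ — to be $p$-nilpotent. The $p$- and $p'$-layers must therefore be genuinely interleaved along the chief series, and the delicate part is to check, at each stage, that the global Brauer character lying over $Z$ still has a \emph{single} irreducible constituent while the non-$p$-nilpotent structure of $G$ is faithfully reproduced in $H/Z$. I would organize this verification through the language of character triples, reducing full ramification at each step to the non-degeneracy of the commutator pairing of the inserted $p'$-layer together with the vanishing of the relevant $p$-group cohomology. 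Here solvability enters essentially — the chief factors are elementary abelian, so each admits either a symplectic doubling over its prime field or a faithful $p$-modular realization — and by \autoref{mainNST} it is in any case unavoidable.
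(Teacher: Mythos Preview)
Your proposal assembles the right ingredients—Gagola's fully ramified layers for $p'$-chief factors and Green's theorem (your ``transport principle'') for $p$-chief factors—but the inductive step is set up in a way that does not close. You pick a \emph{minimal} normal subgroup $M\cong C_r^k$, apply induction to $\bar G=G/M$ to obtain $\bar H$ with $\bar G\hookrightarrow\bar H/\bar Z$, and then propose to ``adjoin'' an extraspecial $r$-layer $E_r$ to $\bar H$. The problem is that $\bar H$ is produced abstractly by induction and carries no action on $M$ (only the subgroup $\bar G$ of $\bar H/\bar Z$ knows about $M$, and even then only through $G$). To form a semidirect or central-by-$\bar H$ extension with $E_r$ you would first have to extend the $\bar G$-action on $M\oplus M^*$ to all of $\bar H$, and then show that the specific extension $1\to M\to G\to\bar G\to 1$ can be realized inside the resulting $H/Z$. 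Neither step is addressed; your appeal to \autoref{lemext} does not help, since that lemma handles only cyclic extensions, whereas here $G/M=\bar G$ is arbitrary. You yourself flag the interleaving obstacle (``one cannot simply place a normal $p'$-group of central type at the bottom and a $p$-group on top''), but the Heisenberg-layer picture you sketch does exactly that, and the final formula $\phi=\prod_r\phi_r$ with $\phi(1)^2=\prod_r|E_r:\Z(E_r)|$ presupposes a central-product-of-extraspecials structure for the $p'$-part that would force $H/Z$ to be $p$-nilpotent.

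The paper avoids all of this by inducting in the opposite direction: take a \emph{maximal} normal subgroup $M\unlhd G$, so $|G:M|=q$ is prime, and let induction supply $K$ with $M\hookrightarrow K/\Z(K)$. One then forms the central quotient $U=K^q/W$ of the $q$-fold direct power and extends by a (possibly twisted) cyclic shift of order $q$, so that $H/Z$ contains $(K/\Z(K))\wr C_q$; the universal embedding theorem then gives $G\hookrightarrow M\wr C_q\le H/Z$ with no need to know the extension class of $G$ over $M$. The case distinction is on $q$: for $q=p$ the tensor character $\tau=\mu^{\otimes p}$ is shift-invariant and extends uniquely by Green's theorem (this is your transport principle, used once per step rather than globally), while for $q\ne p$ one first ensures $q\mid|\Z(K)|$ and then twists the shift by a central element of order $q$ so that $\tau\times 1$ is \emph{not} invariant and its induction is irreducible. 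This wreath-product mechanism is what makes the embedding and the interleaving of $p$- and $p'$-structure automatic.
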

\begin{proof}
We follow closely Gagola's construction~\cite[Theorem~1.2]{GagolaFully}.
Since the trivial group is of $p$-central type, we may assume that $G\ne 1$.
Let $M\unlhd G$ such that $q:=|G:M|$ is a prime. By induction on $|G|$, there exists a solvable group $K$ and a faithful $\mu\in\IBr(K)$ with $\mu(1)^2=|K:\Z(K)|_{p'}$ fulfilling the conclusion for $M$ instead of $G$. 
Let 
\[W:=\{(x_1,\ldots,x_q)\in\Z(K)^q:x_1\ldots x_q=1\}\le\Z(K^q)\]
and define $U:=K^q/W$. Let $z:=(x_1,\ldots,x_q)W\in\Z(U)$. For $g\in K$ and $1\le i\le q$ we have \[1=[(1,\ldots,1,g,1,\ldots,1)W,z]=(1,\ldots,1,[g,x_i],1,\ldots,1)W\]
and thus $[g,x_i]=1$. This shows that $\Z(U)=\Z(K)^q/W\cong\Z(K)$ and $U/\Z(U)\cong (K/Z(K))^q$. 
Since $\mu$ is faithful, we have $\pcore_p(K)=1=\pcore_p(U)$ by \cite[Lemma~2.32]{Navarro}. 

Let $\tau:=\mu\times\ldots\times\mu\in\IBr(K^q)$. Let $\lambda\in\IBr(\Z(K))$ be a constituent of $\mu_{\Z(K)}$. Then for $(x_1,\ldots,x_q)\in W$ we have $\tau(x_1,\ldots,x_q)=\tau(1)\lambda(x_1\ldots x_q)=\tau(1)$ (note that $\Z(K)$ and $W$ are $p'$-groups by induction). Hence, $W\le\Ker(\tau)$ by \cite[Lemma~6.11]{Navarro}. Conversely, let $x:=(x_1,\ldots,x_q)\in\Ker(\tau)$. Suppose that $x_i\notin\Z(K)$. Since $\mu$ is faithful, we obtain $|\mu(x_i)|<\mu(1)$ and $|\tau(x)|<\tau(1)$, a contradiction. Hence, $x\in\Z(K)^q$ and $\tau(1)\lambda(x_1\ldots x_q)=\tau(x)=\tau(1)$. Since $\mu$ and $\lambda$ are faithful, it follows that $x\in W$. Consequently, we may consider $\tau$ as a faithful Brauer character of $U$. 

\textbf{Case~1:} $q=p$.\\
Let $\alpha\in\Aut(K^p)$ be the shift automorphism such that $\alpha(x_1,\ldots,x_p):=(x_p,x_1,\ldots,x_{p-1})$. Clearly, $\alpha(W)=W$ and we may define $H:=U\rtimes\langle\alpha\rangle$. Let $z:=((x_1,\ldots,x_p)W,\alpha^i)\in\Z(H)$ with $0\le i<p$. For $g\in K\setminus\Z(K)$ we compute
\[1=[(g,1,\ldots,1)W,z]=(g,1,\ldots,1,x_{i+1}g^{-1}x_{i+1}^{-1},1,\ldots,1)W\]
and thus $i=0$ and $x_1,\ldots,x_p\in\Z(K)$. Conversely, it is easy to see that $\Z(U)\le\Z(H)$. Hence, $Z=\Z(H)=Z(U)$ and 
\[H/Z\cong (U/\Z(U))\rtimes\langle\alpha\rangle\cong (K/\Z(K))\wr C_p.\]
Moreover, $|G|$, $|H|$ and $|Z|$ have the same prime divisors apart from $p$.
By the universal embedding theorem (see \cite[Theorem~2.6A]{DM}), $G$ is isomorphic to a subgroup of 
\[M\wr(G/M)\le (K/\Z(K))\wr C_p\cong H/Z.\]  
Observe that $\tau$ is $H$-invariant. Hence, by Green's theorem $\tau$ has a (unique) extension $\phi\in\IBr(H)$ (see \cite[Theorem~8.11]{Navarro}). Then 
\[\phi(1)^2=\tau(1)^2=\mu(1)^{2p}=|K/\Z(K)|_{p'}^p=|H:Z|_{p'}\] 
and $H$ is of $p$-central type by \autoref{proppcentral}.

\textbf{Case~1:} $q\ne p$.\\
Here we need to modify the construction along the lines of \cite{GagolaFully}. Suppose first that $|\Z(K)|$ is not divisible by $q$. Let $Q:=\langle x\rangle\cong C_q$ and $\rho\in\Irr(Q)$ be faithful. We replace $(K,\mu)$ by $(K_1,\mu_1):=(K\times Q,\mu\times \rho)$. 
Then $\Z(K_1)=\Z(K)\times Q$ has square-free $p'$-order and $\mu_1(1)^2=|K_1:\Z(K_1)|_{p'}$. Since $\mu$ is faithful, $\pcore_p(K_1)=\pcore_p(K)=1$. Suppose that $(k,x^i)\in\Ker(\mu_1)$ with $k\in K$. Then $|\mu(k)|=\mu(1)$. Thus, $k\in\Z(K)$ and $\mu(k)$ is an integral multiple of a root of unity. On the other hand, 
\[\mu(k)=\mu(1)/\rho(x^i)\in\QQ_{|\Z(K)|}\cap\QQ_q=\QQ,\]
where $\QQ_n$ denotes the $n$-th cyclotomic field. 
This implies 
$\mu(k)=\pm\mu(1)$. In the case $\mu(k)=-\mu(1)$ we have $q\ne 2$ and therefore $\rho(x^i)=1$. This contradicts $\mu_1(k,x^i)=\mu_1(1)$. 
Hence, $\mu(k)=\mu(1)$, $\rho(x^i)=1$ and $(k,x^i)=1$. Thus, $\mu_1$ is faithful. From now on we may assume that $q$ divides $|\Z(K)|$.  

Let $z\in\Z(K)$ be an element of order $q$ and let $Y:=\langle y\rangle\cong C_q$. It is easy to check that the map
\[\alpha:U\times Y\to U\times Y,\qquad((x_1,\ldots,x_q)W,y^i)\mapsto((z^ix_q,x_1,x_2,\ldots,x_{q-1})W,y^i)\]
is a well-defined automorphism of order $q$. We define $H:=(U\times Y)\rtimes\langle\alpha\rangle$. 
Let 
\[h:=((x_1,\ldots,x_q)W,y^i,\alpha^j)\in\Z(H).\] 
Then $\alpha^j$ commutes with $y$ and therefore, $\alpha^j=1$. Moreover,
\[1=[\alpha,h]=(z^ix_qx_1^{-1},x_1x_2^{-1},\ldots,x_{q-1}x_q^{-1})W\]
implies $z^i=1=y^i$. This shows that $h\in\Z(U)$. Conversely, it is easy to see that $\Z(U)\le\Z(H)=Z$. 
Therefore, $Z=\Z(U)\cong\Z(K)$ has square-free $p'$-order and 
\begin{equation}\label{eqHZ}
|H:Z|=\frac{|U|q^2}{|\Z(K)|}=q^2|K:\Z(K)|^q.
\end{equation}
It follows that $|G|$, $|H|$ and $|Z|$ have the same prime divisors apart from $p$. A closer look shows that
\[H/Z\cong \bigl((K/\Z(K))^q\rtimes\langle\alpha\rangle\bigr)\times Y\cong (K/\Z(K)\wr C_q)\times C_q.\]
By the universal embedding theorem, $G$ is isomorphic to a subgroup of $M\wr(G/M)\le K/\Z(K)\wr C_q\le H/Z$. 

Since $\alpha(y)=((z,1,\ldots,1)W,y)$ and $\mu(z)\ne\mu(1)$, the Brauer character $\tau\times 1_Y\in\IBr(U\times Y)$ is not invariant under $\alpha$. By Clifford's theorem, $\phi:=(\tau\times 1_Y)^H\in\IBr(H)$ and $\Ker(\phi)\le Y\cap\alpha(Y)=1$. Moreover, 
\[\phi(1)^2=\tau(1)^2q^2=\mu(1)^{2q}q^2=|K:\Z(K)|_{p'}^qq^2\overset{\eqref{eqHZ}}{=}|H:Z|_{p'}.\qedhere\] 
\end{proof}

\section{Applications to blocks}\label{secappl}

For $N\unlhd G$ and $\lambda\in\IBr(N)$ we define $\IBr(G|\lambda)$ as the set of irreducible constituents of $\lambda^G$ as usual. Recall from \cite[Corollary~8.7]{Navarro} that $\phi\in\IBr(G|\lambda)$ if and only if $\lambda$ is a constituent of $\phi_N$. 

\begin{proof}[Proof of \autoref{mainplength}]
We may assume that $l\ge 2$. 
By \autoref{ThmGa}, there exists a solvable group $H$ of $p$-central type with $l_p(H)\ge l$ and $Z:=\Z(H)$ a $p'$-group.
Let $H/Z$ act faithfully on an elementary abelian $p$-group $V$ (for instance, the regular module) and define $G:=V\rtimes H$. Then $Z=\pcore_{p'}(G)$. Let $\lambda\in\IBr(Z)$ be fully ramified in $H$. Since $\IBr(G)=\IBr(H)$, $\lambda$ is also fully ramified in $G$. By a theorem of Fong (see \cite[Theorem~10.20]{Navarro}), there exists a block $B$ of $G$ of maximal defect with $\IBr(B)=\IBr(G|\lambda)$. In particular, $l(B)=1$. 
Let $(V,b)$ be a $B$-subpair, i.\,e. $b$ is a Brauer correspondent of $B$ in $\C_G(V)=V\times Z$. Then $\IBr(b)=\{1_V\times\lambda\}$ and $\N_G(V,b)=G$. Since $G/\C_G(V)\cong H/Z$ is not a $p$-group (as $l_p(H/Z)=l_p(H)\ge l\ge 2$), $B$ is not nilpotent.
\end{proof}

We give a concrete example starting with $H:=\mathtt{SmallGroup}(54,8)\cong 3^{1+2}_+\rtimes C_2$ of $2$-central type. This group acts on $V\cong C_2^4$ and
\[G:=V\rtimes H\cong\mathtt{SmallGroup}(864,3996)\]
has $2$-length $2$. Moreover, $G$ has a $2$-block $B$ of maximal defect and $l(B)=1$. We remark that $B$ covers the a non-principal block of $V\rtimes 3^{1+2}$, which was investigated in \cite{LS}.

\begin{proof}[Proof of \autoref{mainnonsolv}]
Let $b_0$ be the principal $p$-block of $Q$. Then $b\otimes b_0$ is a $G$-invariant block of $H\times Q$. 
By \cite[Corollary~9.21]{Navarro}, $B:=(b\otimes b_0)^G$ is the unique block covering $b\otimes b_0$. 
Since $A$ acts regularly on $\IBr(b\otimes b_0)$, Clifford theory implies that $l(B)=1$. Note that $(Q,b\otimes b_0)$ is a $B$-subpair. Since, $G/HQ\cong A$ is not a $p$-group (as $|A|=l(b)$ is not a $p$-power), $B$ cannot be nilpotent. 
\end{proof}

It is not so easy to find groups $H$ with blocks $b$ where $\Aut(H)$ acts transitively on $\IBr(b)$. 
The quasisimple groups $H$ with the desired property were investigated and partially classified in \cite{MNS}. For alternating and sporadic groups, $b$ must have defect $1$. Here, $l(b)$ divides $p-1$ and one can choose $A\cong C_{l(b)}$ and $Q\cong C_p$. The smallest case, $p=3$ and $H=\SL(2,5)$, leads to the group
\[G:=(H\times C_3)\rtimes C_2=H\rtimes S_3=\mathtt{SmallGroup}(720,414)\] 
with a non-nilpotent $3$-block $B$ with defect $2$ and $l(B)=1$. The same construction works with the simple group $H=\PSL(2,11)$.
Similarly examples can be obtained by wreath products like
\[G=(H\times H)\rtimes\langle(\alpha,1)\sigma\rangle\cong H^2\rtimes C_4\le \Aut(H^2)\]
where $\alpha\in\Aut(H)$ and $\sigma$ interchanges the two copies of $H$. By Kessar~\cite{KessarC3C3}, such blocks are Morita equivalent to blocks of solvable groups.
For $p=5$, one can take $H=\PSL(2,19)$ (there is certainly an infinite family). As long as we take blocks $b$ of $H$ with cyclic defect group and conjugate Brauer characters in $\Aut(H)$, the Brauer tree of $b$ is a star (otherwise there is no graph automorphism permuting the Brauer characters). Then $b$ is a so-called \emph{inertial} block, i.\,e. $b$ is basically Morita equivalent to its Brauer correspondent in the normalizer of a defect group (see \cite{Puignilex}). The same must be true for the block $b\otimes Q$ of $H\times Q$. Now a theorem of Zhou~\cite[Corollary]{ZhouInertial} implies that $B$ is inertial. In particular, $B$ is Morita equivalent to a block of a solvable group.

In general, the block of $H\rtimes A$ covering $b$ is often nilpotent. In this case, a theorem of Puig~\cite{Puignilex} shows that $b$ is inertial.

There are examples where $b$ has non-cyclic defect groups.
For $p=2$, we start with $H:=\PSU(3,5)$ where $b$ has defect group $C_2^2$ and $l(b)=3$ ($b$ is Morita equivalent to the principal block of $S_4$). We take $A\cong C_3$ and $Q=C_2^2$. Then $G:=H\rtimes A_4$ has a $2$-block with defect group $C_2^4$. By Eaton~\cite{EatonE16} this block is again Morita equivalent to a block of a solvable group.

We now construct a block $B$ with $\IBr(B)=\{\phi\}$ such that $\phi$ has a unique lift. If a group $H$ acts on a group $V$, we denote the stabilizer of $v\in V$ in $H$ by $H_v$. 

\begin{Thm}\label{unilift}
Let $H$ be a $p'$-group of central type. Suppose that $H/\Z(H)$ acts faithfully on an elementary abelian $p$-group $V$ such that $|H:H_v|>|H_v:\Z(H)|$ for all $v\in V\setminus\{1\}$. Then $G:=V\rtimes H$ has a $p$-block $B$ such that $\IBr(B)=\{\phi\}$ and $\phi$ has a unique lift to $\Irr(B)$. 
\end{Thm}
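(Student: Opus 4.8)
The plan is to realize the block $B$ via Fong's theory over the normal $p'$-core and then count the lifts of $\phi$ by a degree argument coming from Clifford theory over $V$. First I would record the basic structure of $G=V\rtimes H$. Since $H$ is a $p'$-group, $V=\pcore_p(G)$ is the normal Sylow $p$-subgroup, so $\IBr(G)=\IBr(G/V)=\Irr(H)$, and a short computation shows $\C_G(V)=V\times\Z(H)$ and $Z:=\Z(H)=\pcore_{p'}(G)$. As $H$ has central type there is a fully ramified $\lambda\in\Irr(Z)=\IBr(Z)$, say $\lambda^H=e\chi$ with $\chi\in\Irr(H)$ and $e=\chi(1)=\sqrt{|H:Z|}$; since $V$ centralizes $Z$, $\lambda$ is $G$-invariant. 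By Fong's theorem \cite[Theorem~10.20]{Navarro} there is a block $B$ with $\IBr(B)=\IBr(G|\lambda)$, and because $\IBr(G|\lambda)=\IBr(H|\lambda)=\{\chi\}$ (viewed in $\IBr(G)$) we get $\IBr(B)=\{\phi\}$, where $\phi=\widetilde{\chi}^0$ is the Brauer character of the inflation $\widetilde{\chi}\in\Irr(G)$; in particular $\phi(1)=\chi(1)=\sqrt{|H:Z|}$. Since every $\psi\in\Irr(G|\lambda)$ has $\phi$ as its only modular constituent we also have $\Irr(B)=\Irr(G|\lambda)$, and each such $\psi$ satisfies $\psi^0=d_\psi\phi$ with $d_\psi\ge1$. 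Hence the lifts of $\phi$ are exactly the $\psi\in\Irr(G|\lambda)$ of minimal degree $\phi(1)$.

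Next I would produce one lift and bound all the others using the normal subgroup $V$. The inflation $\widetilde{\chi}$ lies in $\Irr(G|1_V)\cap\Irr(G|\lambda)$, has degree $\chi(1)=\phi(1)$, and agrees with $\chi$ on $p$-regular elements, so $\widetilde{\chi}^0=\phi$: this is a lift. Conversely, every $\psi\in\Irr(G|\lambda)$ lies over a unique $H$-orbit on $\Irr(V)$. If the orbit is trivial then $V\le\Ker\psi$ and $\psi$ is the inflation of a character in $\Irr(H|\lambda)=\{\chi\}$, so $\psi=\widetilde{\chi}$. If the orbit of some $\nu\in\Irr(V)$ is nontrivial, Clifford's theorem gives that $|G:I_G(\nu)|=|H:H_\nu|$ divides $\psi(1)$, whence $\psi(1)\ge|H:H_\nu|$.

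It then remains to show that every nontrivial $H$-orbit on $\Irr(V)$ is longer than $\phi(1)$. Rewriting the hypothesis: since $|H:Z|=|H:H_v|\,|H_v:Z|$, the inequality $|H:H_v|>|H_v:Z|$ is equivalent to $|H:H_v|>\sqrt{|H:Z|}=\phi(1)$, so every nontrivial $H$-orbit on $V$ has length exceeding $\phi(1)$. Finally I would transfer this to $\Irr(V)$ by observing that $V$ and $\Irr(V)$ are isomorphic as $H$-sets: for every $K\le H$ coprime action yields $V=\C_V(K)\oplus[V,K]$, so that $|\Fix_V(K)|=|\C_V(K)|=|\Irr(V/[V,K])|=|\Fix_{\Irr(V)}(K)|$; since all marks coincide, the two $H$-sets are isomorphic and share the same multiset of orbit lengths. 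Thus every nontrivial orbit on $\Irr(V)$ also has length greater than $\phi(1)$, so any $\psi\in\Irr(G|\lambda)$ over a nontrivial orbit has $\psi(1)>\phi(1)$ and cannot be a lift. Consequently $\widetilde{\chi}$ is the unique lift of $\phi$.

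The step I expect to be the main obstacle is this transfer from orbits on $V$ to orbits on $\Irr(V)$: the hypothesis is phrased for points of $V$, whereas Clifford theory naturally produces stabilizers of characters $\nu\in\Irr(V)$. The fixed-point/table-of-marks computation settles it cleanly, but it is the one place where a genuine (if standard) argument is needed; the block identification via Fong and the reformulation of ``lift'' as ``minimal degree'' are routine once $Z=\pcore_{p'}(G)$ and $\IBr(G)=\Irr(H)$ are in place.
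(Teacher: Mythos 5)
Your proposal is correct and follows essentially the same route as the paper's proof: Fong's theorem identifies the block $B$ with $\Irr(B)=\Irr(G|\lambda)$ and $\IBr(B)=\{\phi\}$, lifts of $\phi$ are recognized as the characters of minimal degree, and Clifford theory over $V$ together with the hypothesis rewritten as $|H:H_v|>\sqrt{|H:Z|}=\phi(1)$ rules out every character lying over a nontrivial orbit on $\Irr(V)$. The only (cosmetic) difference is that you prove the permutation isomorphism between the $H$-sets $V$ and $\Irr(V)$ via the coprime decomposition $V=\C_V(K)\oplus[V,K]$ and equality of marks, whereas the paper simply cites this fact from \cite[Corollary~2.12]{Navarro2}.
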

\begin{proof}
Let $Z:=\Z(H)$ and choose a fully ramified character $\lambda\in\Irr(Z)$. Let $\lambda^H=e\phi$ for some $\phi\in\Irr(H)$. By Fong's theorem, there exists a block $B$ of $G$ such that $\Irr(B)=\Irr(G|\lambda)$ and $\IBr(B)=\IBr(G|\lambda)$. We may consider the inflation of $\phi$ as an ordinary character and as a Brauer character of $B$. It suffices to show that every $\chi\in\Irr(B)\setminus\{\phi\}$ has degree $\chi(1)>\phi(1)=e$. There exists some non-trivial character $\mu\in\Irr(V)$ such that $\mu\times\lambda$ is a constituent of $\chi_{VZ}$. Since $H$ is a $p'$-group, the actions of $H$ on $V$ and on $\Irr(V)$ are permutation-isomorphic (see \cite[Corollary~2.12]{Navarro2}). Thus, by Clifford theory and by the hypothesis we have 
\[\chi(1)\ge|G:G_{\mu\times\lambda}|=|H:H_\mu|>|H_\mu:Z|.\] 
Multiplying the last inequality by $|H:H_\mu|$ further yields $|H:H_\mu|>\sqrt{|H:Z|}=e$. Hence, $\chi(1)>\phi(1)$ as desired.
\end{proof}

The property $|H:H_v|>|H_v:\Z(H)|$ in \autoref{unilift} is fulfilled whenever $H/\Z(H)$ acts semiregularly on $V$, i.\,e. $H_v=1$ for all $v\in V\setminus\{1\}$. In this case, $V\rtimes(H/\Z(H))$ is a Frobenius group with complement $H/\Z(H)$. In particular, the Sylow subgroups of $H/\Z(H)$ are cyclic or quaternion groups. But then $H/\Z(H)$ has trivial Schur multiplier and there is no group $H$ of central type. 
Hence, $|H/\Z(H)|$ is a product of at least four (possibly equal) primes (keeping in mind that $|H/\Z(H)|$ is a square). 
Suitable groups $H$ can be found with GAP~\cite{GAPnew}. For instance, $H=\mathtt{SmallGroup}(128,144)$ is of central type and acts on $V\cong C_5^4$ such that $V\rtimes (H/\Z(H))\cong\mathtt{PrimitiveGroup}(625,166)$ with the desired property.

The condition in \autoref{unilift} can be relaxed by taking the degrees of the characters in $H_\mu$ (where $\mu\in\Irr(V)$) into account. This leads to a smaller example where $H\cong\mathtt{SmallGroup}(128,138)$, $V\cong C_3^4$ and $V\rtimes (H/\Z(H))\cong\mathtt{PrimitiveGroup}(81,33)$. An example for $p=2$ is given by 
\[H/\Z(H)\cong\mathtt{IrredSolMatrixGroup}(18,2,6,163)\cong\mathtt{SmallGroup}(81,9)\]
acting on $V\cong C_2^{18}$. 

We use the opportunity to construct yet another unusual class of blocks. 
Let $Z\unlhd H$ and $\lambda\in\Irr(Z)$. A theorem of Gallagher asserts that $|\Irr(H|\lambda)|$ equals the number of so-called $\lambda$-\emph{good} conjugacy classes of $H/Z$ (see \cite[Section~5.5]{Navarro2}). If $\lambda$ is fully ramified, then $1$ is the unique $\lambda$-good conjugacy class. One may ask when the opposite situation occurs, where all conjugacy classes of $H/Z$ are good. 
This is holds for instance whenever $\lambda$ extends to $G$ (then $|\Irr(H|\lambda)|=k(H/Z)$ by Gallagher's theorem). 
But the converse is not true. 
For instance, $H:=\mathtt{SmallGroup}(128,731)$ has a subgroup $Z\le\Z(H)\cap H'$ of order $2$ such that $k(H)=2k(H/Z)$. The non-trivial character of $Z$ cannot extend to $H$ since $Z\le H'$.  
Now $H/Z$ acts non-trivially on $V\cong C_3^6$. It turns out that the solvable group $G:=V\rtimes H$ has two $3$-blocks $B_0$ and $B_1$ with defect group $V$ and inertial quotient $H/Z$ such that $k(B_0)=k(B_1)=84$ and $l(B_0)=l(B_1)=16$ ($B_0$ is the principal block of $G$). However, $B_0$ and $B_1$ are not Morita equivalent, because they have distinct decomposition matrices (the former matrix contains a $4$, but the latter does not).

\section*{Acknowledgment}
The work on this paper started during a research stay at the University of Valencia in February 2023. I thank Gabriel Navarro and Alexander Moretó for the great hospitality received there. \autoref{mainNST} was initiated by a question of Britta Späth at the Oberwolfach workshop “Representations of finite groups” (ID 2316) in April 2023. I further thank Radha Kessar for some helpful discussions on this paper. 
This paper is supported by the German Research Foundation (\mbox{SA 2864/4-1}).

\end{document}